\newtheorem{definition}{Definition}[section]
\newtheorem{lemma}[definition]{Lemma}
\newtheorem{theorem}[definition]{Theorem}
\newtheorem{remark}[definition]{Remark}
\newtheorem{example}[definition]{Example}
\def\e{\varepsilon}
\def\dxy{\,dx dy}
\def\dx{\,dx}
\title{Discrete approximation of nonlocal-gradient energies}
\author{Andrea Braides
\\ \small SISSA, via Bonomea 265, Trieste, Italy\\  \\  Andrea Causin and
Margherita Solci \\ \small
DADU, Universit\`a di Sassari\\ \small
 piazza Duomo 6, 07041 Alghero (SS), Italy}
\date{}
\begin{document} 

\maketitle

\abstract{We study a discrete approximation of functionals depending on nonlocal gradients.
The discretized functionals are proved to be coercive in classical Sobolev spaces.

\smallskip

\noindent{\bf Keywords:} nonlocal gradients, peridynamics, fractional Sobolev spaces, discrete approximations, discrete-to-continuum convergence. }

\section{Introduction}
Variational problems involving nonlocal gradients $\nabla_\rho u$ defined by
\begin{equation}\label{nogra}
\nabla_\rho u(x)=\int_{\mathbb R^d} \rho(\xi){(u(x+\xi)-u(x))\xi\over|\xi|}d\xi,
\end{equation}   
where $\rho$ is a suitable symmetric positive kernel, have been recently considered e.g.~in \cite{SS,KS}. 
In particular, Riesz kernels have been used in connection with fractional Sobolev spaces (as in \cite{CS1,BCS}), in which case, and in general in the case of singular kernels, this integral must be considered as a principal value, but this fact is not relevant for the present paper. 

Fractional-gradient integral functionals take the form
\begin{equation}
\int_{\mathbb R^d} f(\nabla_\rho u)\dx ,
\end{equation}
and boundary-value problems can be addressed on suitably defined spaces.
These energies allow to consider problems stated in a weaker form than in usual Sobolev spaces. On the other hand, 
by scaling such gradients, an approximation can be provided of classical functionals of the Calculus of Variations \cite{BMC}. More precisely, after considering scaled kernels $\rho_\e$ defined by $\rho_\e(\xi)={1\over\e^d}\rho({\xi\over\e})$, from the weak convergence of $u_\e$ to $u$, we may deduce the weak convergence of $\nabla_{\rho_\e}u_\e$ to (a multiple of) the usual weak gradient $\nabla u$, upon some boundedness conditions on the $L^p$-norm of $\nabla_{\rho_\e}u_\e$ ($1<p<\infty$). In particular arguing as in \cite{CS1} (see also \cite{MS}) we can deduce the convergence
\begin{equation}\label{conv-rho}
\lim_{\e\to 0} \int_{\mathbb R^d} |\nabla_{\rho_\e} u|^p\dx = C_\rho^p \int_{\mathbb R^d} |\nabla u|^p\dx, \quad\hbox{ where }\quad C_\rho={1\over d}\int_{\mathbb R^d} \rho(\xi)|\xi|\,d\xi,
\end{equation}
in the spirit of the celebrated paper of Bourgain et al.~\cite{BBM}, as well as the related $\Gamma$-con\-ver\-gence result. 
These results can be achieved thanks to the characterization of nonlocal gradients in distributional form \cite{CS1,CS2,SS}, which guarantees that weak limits of nonlocal gradients are nonlocal gradients of the weak limit, or even, in the case of the convergence in \eqref{conv-rho}, classical weak gradients. 

In this paper we propose a discretized approach to energies depending on nonlocal gradients as in \eqref{nogra}. 
Even though this subject has a clear connection with numerical methods in the treatment of fractional problems (see e.g.~\cite{de2,de1,DGLZ,TD1,TD2}), this work should be viewed as part of the exploration of the use of recent techniques in the analysis of discrete systems by variational methods.
In order to explain the spirit of such an approach, we can compare the convergence in \eqref{conv-rho} with the analog convergence in fractional-type Sobolev spaces shown by Bourgain et al.~\cite{BBM}, of the type
\begin{equation}\label{conv-rho-2}
\lim_{\e\to 0} \int_{\mathbb R^d\times \mathbb R^d} \rho_\e(x-y) {|u(x)- u(y)|^p\over |x-y|^p}\dxy =  \widetilde C_\rho\int_{\mathbb R^d} |\nabla u|^p\dx,
\ \hbox{ where }\  \widetilde C_\rho=\int_{\mathbb R^d} \rho(\xi)\,d\xi,
\end{equation} 
For functionals of type \eqref{conv-rho-2} a discretization approach is possible, proving their equivalence with discrete energies depending on differences $u_i-u_j$ parameterized on a cubic lattice. Such differences can be interpreted as difference quotients of some interpolation, for which the finiteness of the energy implies boundedness in some classical Sobolev space.

In the case of nonlocal gradients such an equivalence is more delicate by the possible cancellations in \eqref{nogra}.
We focus on the one-dimensional case, proposing an extension to higher dimension at the end of the paper. In order to define discrete nonlocal gradients in parallel with \eqref{nogra} it is convenient to note that, thanks to the symmetry of $\rho$, we also have 
\begin{equation}\label{nogra-2}
\nabla_\rho u(x)=\int_{\mathbb R} \rho(\xi){u(x+\xi)\,\xi\over|\xi|}d\xi=- \int_0^{+\infty}\rho(\xi)\,u(x-\xi)d\xi+
\int_0^{+\infty}\rho(\xi)\,u(x+\xi)d\xi.
\end{equation}   
With this formula in mind, if $u\colon\mathbb Z\to\mathbb R$ then we define its {\em discrete nonlocal gradient} as the function $u'_\rho\colon\mathbb Z\to\mathbb R$, whose value at $k\in\mathbb Z$ is
\begin{eqnarray}\label{discretegrad-0}
(u'_\rho)_k=
-\sum_{i=1}^M\rho_i \,u_{k+1-i}+\sum_{i=1}^M\rho_i \,u_{k+i}
,
\end{eqnarray}
where $u_k=u(k)$ and $\rho_i$ are positive values representing a discretization of the kernel $\rho$.
Note that in order to avoid considering the value of $\rho$ at $0$ we have introduced an asymmetry in this definition, which amounts to a translation of $1\over 2$. 
This is not surprising if we view $u_k$ as an average of a continuum function over the interval $[k,k+1]$, whose center is $k+{1\over 2}$.
A formally more symmetric definition would be 
\begin{eqnarray}\label{symmgrad}
(u'_\rho)_k=
-\sum_{i=1}^M\rho_i \,u_{k-i}+\sum_{i=1}^M\rho_i \,u_{k+i}
,\end{eqnarray}
but this definition will not lead to coercive energies, as shown below.

Next, we scale this definition.
If $\e>0$ and $u\colon\e \mathbb Z\to\mathbb R$, then the {\em nonlocal gradient at scale $\e$} is the 
function $u'_{\rho_\e}\colon\e\mathbb Z\to\mathbb R$ related to the discrete kernel $\rho_\e$, which is defined from $\rho$ by scaling in the same way as for continuous kernels where $\rho_\e(x)={1\over\e}\rho\bigl({x\over\e}\bigr)$. The  value of $u'_{\rho_\e}$ at $\e k$, with $k\in\mathbb Z$ is
\begin{eqnarray}\label{discretegrad-e}
(u'_{\rho_\e})_k={1\over\e}\Bigl(-\sum_{i=1}^M\rho_i u_{k+1-i}+\sum_{i=1}^M\rho_i u_{k+i}\Bigr),
\end{eqnarray}
where now $u_k= u(\e k)$.

The main result proved below is that we can improve the weak convergence of the discrete nonlocal gradients to the weak convergence of the gradients of the interpolations. That is, if $u^\e\colon\e \mathbb Z\to\mathbb R$ and $u\colon\mathbb R\to\mathbb R$ are such that the interpolations of $u^\e$ weakly converge in $L^1_{\rm loc}$ to $u$ and the interpolations of $(u^\e)'_\rho$ weakly converge in $L^2$, then indeed the interpolations of $u^\e$ weakly converge in $W^{1,2}_{\rm loc}$ to $u$. 

 The improved convergence is not a trivial fact and requires some minimal assumptions on $\rho_i$, since in \eqref{discretegrad-e} we may have cancellations due to the changing sign of the coefficients. Indeed, if we have a constant $\rho_i=\overline \rho$ then 
\begin{eqnarray}\nonumber
(u'_{\rho_\e})_k={\overline \rho\over\e}\bigl( -u_{k-M+1}-u_{k-M+2}-\ldots-u_{k-1}-u_{k}+u_{k+1}+u_{k+2}+\ldots+ u_{k+M}\bigr).
\end{eqnarray}
If $M$ is even and we take $u^\e_k=(-1)^k$ then the nonlocal gradient (at scale $\e$) of $u^\e$ is $0$, but the interpolations of $u^\e$ only converge weakly in $L^p_{\rm loc}$ and are not bounded in any Sobolev space.
The same counterexample holds  with arbitrary $\rho_i$, also not constant, if the symmetric definition of gradient \eqref{symmgrad} is used.

As an application, in a discrete-to-continuum setting \cite{ABCS}, we can consider functionals of the form
\begin{equation}
\sum_{k\in\mathbb Z}\e f\biggl({1\over\e}\Bigl(\sum_{j=1}^M (u_{k+j}-u_k)\rho_j -\sum_{j=1}^M (u_{k-j+1}-u_k)\rho_j\Bigr)\biggr),
\end{equation}
and prove their convergence with respect to the weak convergence of the interpolations in $H^1(\mathbb R)$ to $\displaystyle\int\limits f(K u')dt$ with $K=\sum\limits_{j=1}^M \rho_j(2j-1)$.

These results on discrete functions can be read in the continuum case as statements on average values of sequences with bounded energies. For example, if $\rho$ has support $[-1,1]$ and $\rho_i=\rho({i\over M})$ then we are considering a piecewise-constant approximation of $\rho$. Given a continuum $u$ and defined the value $u_k$ as the average of $u$ on $[\e {k\over M}, \e {k+1\over M}]$, the discrete nonlocal gradient of $\{u_k\}_k$ corresponds to the continuum nonlocal gradient of $u$ for the discretized kernel at $\e k$, and the result above can be read as a compactness result in $H^1_{\rm loc}$ for (the piecewise-affine interpolations of) such averages.

\section{Discrete nonlocal gradients}

We consider $M\in\mathbb N$ and a decreasing array of positive numbers $\rho_1,\ldots,\rho_M$.
Let $u\colon\mathbb Z\to\mathbb R$ and let $u_k=u(k)$. The {\em discrete non-local gradient} related to $\rho$  is 
the function $u'_\rho\colon\mathbb Z\to\mathbb R$, whose value at $k\in\mathbb Z$ is defined by 
\eqref{discretegrad-0}.
Note that we can equivalently write this quantity as
\begin{eqnarray}\label{discretegrad-diff}\nonumber
(u'_\rho)_k&=&  \rho_M(u_{k+M}-u_{k-M+1})+ \rho_{M-1} (u_{k+M-1}-u_{k-M+2})+\ldots +\rho_1(u_{k+1}-u_k)
\\ \nonumber
&=& \rho_M\sum_{j=k-M+2}^{k+M}(u_{j}-u_{j-1})+ \rho_{M-1} \sum_{j=k-M+3}^{k+M-1}(u_{j}-u_{j-1})+\ldots +\rho_1(u_{k+1}-u_k)
\\ \nonumber
&=&\rho_M (u_{k-M+2}- u_{k-M+1}) +(\rho_{M-1}+\rho_M)(u_{k-M+3}-u_{k-M+2})+\ldots
\\ \nonumber&&+(\rho_2+\rho_3+\ldots+\rho_M)(u_k-u_{k-1})+
(\rho_1+\rho_2+\ldots+\rho_M)(u_{k+1}-u_k)\\  \nonumber
&&+(\rho_2+\rho_3+\ldots+\rho_M)(u_{k+2}-u_{k+1})+ \ldots
\\
&& +(\rho_{M-1}+\rho_M)(u_{k+M-1}-u_{k+M-2})+\rho_M (u_{k+M}- u_{k+M-1}).
\end{eqnarray}

We will consider $\e>0$ and the {\em scaled discrete non-local gradients} defined for functions $u\colon\e\mathbb Z\to\mathbb R$ in \eqref{discretegrad-e} as the functions $u'_{\rho_\e}\colon\e\mathbb Z\to\mathbb R$ given at the point $\e k$ by
\begin{equation}
(u'_{\rho_\e})_k= {1\over\e}(u'_\rho)_k,
\end{equation}
where $u'_\rho$ is given by \eqref{discretegrad-0} and we have used the notation $u_k=u(\e k)$. 

 Note that if we regard the value of $u_k$ as a mean value of a continuous function $u$ over the interval $[k,k+1]$ then we loose some symmetry. In particular, the analog of formula \eqref{nogra}, obtained from \eqref{discretegrad-e} subtracting $u_k$ from all terms, reads as
\begin{eqnarray}\label{discretegrad}\nonumber
&&\hskip-2cm(u'_{\rho_\e})_k= {1\over\e}\Bigl(-\rho_M(u_{k-M+1}-u_{k})-\rho_{M-1}(u_{k-M+2}-u_{k})-\ldots-\rho_{2}(u_{k-1}-u_{k})\\
&& +\rho_1(u_{k+1}-u_k)+\ldots+\rho_{M-1}(u_{k+M-1}-u_{k}) +\rho_M(u_{k+M}-u_{k})\Bigr).
\end{eqnarray}

Even though by 
\eqref{discretegrad-diff} $(u'_{\rho_\e})_k$ can be seen as a combination of the difference quotients
$$
u_{j+1}-u_{j}\over\e
$$
for ${k-M+1}< j\le k+M-1$, due to the sign changes in \eqref{discretegrad}, in general $(u'_\rho)_k$ cannot be interpreted in terms of difference quotients of some interpolation for which a bound in some Sobolev space can be derived, except for $M=1$, in which case only one term is present and we have a classical nearest-neighbour interaction problem.

If $u_i=\varphi_i=\varphi(\e i)$ for some $C^1$-function, 
since 
$$
{u_{k+j}-u_{k+j-1}\over\e}= {\varphi(\e(k+j))-\varphi(\e(k+j-1))\over\e}= \varphi'(\e k)+o(1)
$$
as $\e\to 0$ for all $j\in\{-M+2,\ldots,M\}$, by \eqref{discretegrad-diff} 
we have 
\begin{eqnarray}\label{discretegrad-phi}\nonumber
(u'_{\rho_\e})_k=   \varphi'(\e k) \sum_{j=1}^M \rho_j (2j-1)+o(1),\end{eqnarray}
so that the piecewise-affine (or, equivalently, the piecewise-constant) interpolations of $\varphi'_{\rho_\e}$ converge to $\varphi'$ times the constant $K:=\sum_{j=1}^M \rho_j (2j-1)$. 

We will examine the asymptotic behaviour of functionals of the form
\begin{equation}
F_\e(u)=\sum_{k\in\mathbb Z} \e f\bigl((u'_{\rho_\e})_k\bigr)
\end{equation}
when $f(z)\ge c_1|z|^2$, and in particular their coerciveness properties.
To that end, let $u^\e:\e\mathbb Z\to\mathbb R$. 
Note that if $\varphi\in C^\infty_c(\mathbb R)$, from the equality
$$
\sum_{k\in\mathbb Z}\e ((u^\e)'_{\rho_\e})_k\varphi_k= -\sum_{k\in\mathbb Z}\e (\varphi'_{\rho_\e})_k u^\e_k,
$$
we deduce that if the interpolations of $\{u^\e_k\}_k$  weakly converge to some $u$ in $L^2(\mathbb R)$ as $\e\to0$ and the interpolations of $\{((u^\e)'_{\rho_\e})_k\}_k$  weakly converge to some $v$ in $L^2(\mathbb R)$  as $\e\to0$, then 
\begin{equation}\label{weakeq}
\int_{\mathbb R} v\varphi\dx= -\int_{\mathbb R} K\varphi' u\dx.
\end{equation}
Hence, $u\in W^{1,2}(\mathbb R)$ and 
the interpolations of $\{((u^\e)'_{\rho_\e})_k\}_k$ weakly converge to $Ku'$. Our aim is to improve this convergence showing that actually the piecewise-affine interpolations of $\{u^\e_k\}_k$ converge in  $W^{1,2}(\mathbb R)$.

\section{Eigenvalues of banded circulant matrices}
Using the second equality in \eqref{discretegrad-diff} we will express the discrete nonlocal gradient as a linear combination of differences of nearest neighbours through a Toeplitz matrix, or, equivalently considering boundary conditions, a circulant matrix. Coerciveness properties can be deduced from bounds on minimal eigenvalues of such a matrix, for which a general result can be proved.

\smallskip

We consider symmetric $n$-banded circulant matrices; that is, $N\times N$ matrices of the form 
\begin{equation}\label{top}
A=\begin{pmatrix}
\sigma_0 & \sigma_1 & \sigma_2& \ & \cdots & \sigma_{N-1}
\\
\sigma_{N-1}& \sigma_0 & \sigma_1 & \sigma_2&  & \vdots 
\\
\ &\sigma_{N-1}& \sigma_0 & \sigma_1 &\ddots &\ \\
\vdots & \ddots &\ddots &\ddots &\ & \sigma_2
\\
\ &\ &\ &\ &\ &\sigma_1\\
\sigma_1&\cdots& \ &\ &\sigma_{N-1}&\sigma_0
\end{pmatrix}
\end{equation}
with $\sigma_{N-j}=\sigma_j$, $\sigma_n\neq 0$ and $\sigma_j=0$ if $j\in\{n+1, \ldots, N-n-1\}$.

We assume that the following convexity condition holds 
\begin{equation}\label{convex}
\sigma_{j-1}-\sigma_{j}>\sigma_j-\sigma_{j+1} \ \ \hbox{\rm for all }\ j\in\{1, \dots, n\},
\end{equation} 
which in particular implies that $\sigma_j>0$ for $j\in\{0,\dots, n\}$.

\begin{lemma}\label{lemma} Let $N>2n$. Then $\lambda_{\rm min}$, the minimal eigenvalue of $A$, is larger than a positive constant independent of $N$ .
\end{lemma}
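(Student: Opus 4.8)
The plan is to diagonalize the circulant matrix $A$, which turns the claim into the positivity of a single trigonometric polynomial that does not depend on $N$. Recall that an $N\times N$ circulant matrix with first row $(\sigma_0,\dots,\sigma_{N-1})$ has eigenvalues $\lambda_k=\sum_{j=0}^{N-1}\sigma_j e^{2\pi i jk/N}$, $k=0,\dots,N-1$. Using $\sigma_{N-j}=\sigma_j$ and $\sigma_j=0$ for $n<j<N-n$, and setting $g(\theta):=\sigma_0+2\sum_{j=1}^{n}\sigma_j\cos(j\theta)$, this reduces to
\[
\lambda_k=\sigma_0+2\sum_{j=1}^{n}\sigma_j\cos(2\pi jk/N)=g(2\pi k/N).
\]
The key observation is that $g$ is independent of $N$: the eigenvalues of $A$ are just the values of the fixed function $g$ sampled at the $N$-th roots of unity, so $\lambda_{\rm min}\ge\min_{\theta}g(\theta)$ for every $N>2n$, and it suffices to show that $\min_{\theta}g(\theta)$ is a strictly positive number depending only on the $\sigma_j$.

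The main step is then to prove $g>0$, and this is exactly where \eqref{convex} is used: without it $g$ need not be nonnegative, the cosines changing sign. I would apply summation by parts twice. Set $\sigma_{n+1}:=0$ and $\delta_m:=\sigma_m-\sigma_{m+1}$. A first Abel summation rewrites $g$ through the Dirichlet kernels $D_m(\theta)=\sum_{\ell=-m}^{m}e^{i\ell\theta}$ as $g=\sum_{m=0}^{n}\delta_m D_m$; since the Fejér kernel $F_m\ge0$ satisfies $\sum_{\ell=0}^{m}D_\ell=(m+1)F_m$, a second Abel summation gives
\[
g(\theta)=\sum_{m=0}^{n-1}(\delta_m-\delta_{m+1})(m+1)F_m(\theta)+\delta_n(n+1)F_n(\theta).
\]
By \eqref{convex} one has $\delta_m-\delta_{m+1}=\sigma_m-2\sigma_{m+1}+\sigma_{m+2}>0$ for $m=0,\dots,n-1$, and $\delta_n=\sigma_n>0$ (the $\sigma_j$ being positive, as observed after \eqref{convex}). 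Since every $F_m\ge0$ and $F_0\equiv1$, all summands are nonnegative and the $m=0$ summand equals the constant $\delta_0-\delta_1=\sigma_0-2\sigma_1+\sigma_2$; hence $g(\theta)\ge\sigma_0-2\sigma_1+\sigma_2>0$ for all $\theta$, and $\lambda_{\rm min}\ge\sigma_0-2\sigma_1+\sigma_2>0$, independently of $N$.

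The step I expect to be the real obstacle is precisely this positivity of the symbol: it is essentially the only place the hypotheses enter, and the ``double Abel summation into Fejér kernels'' is the natural but not obvious device that converts the second-difference information in \eqref{convex} into a uniform pointwise lower bound. One technical point to isolate is the borderline case $N=2n+1$, where the band wraps onto itself so that the matrix entry $\sigma_{n+1}$ need not vanish and \eqref{convex} at $j=n$ is correspondingly weaker; being a single value of $N$, it can be handled directly (the sampled values $g(2\pi k/N)$ avoid $\theta=\pi$). Alternatively one may bypass Fourier analysis and work with the quadratic form, starting from $\langle Ax,x\rangle=\bigl(\sigma_0+2\sum_{j=1}^n\sigma_j\bigr)\|x\|^2-\sum_{j=1}^{n}\sigma_j\sum_k(x_k-x_{k+j})^2$ and applying the same two summations by parts to the window sums $\sum_k|\sum_{i=0}^{m}x_{k+i}|^2$ together with Cauchy--Schwarz; this reproduces the same constant $\sigma_0-2\sigma_1+\sigma_2$.
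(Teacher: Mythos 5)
Your proof is correct and follows essentially the same route as the paper: bound $\lambda_{\rm min}$ below by the minimum of the symbol $\sigma_0+2\sum_{j=1}^n\sigma_j\cos(j\theta)$ and prove that symbol positive by writing it, via condition \eqref{convex}, as a nonnegative combination of Fej\'er kernels (your double Abel summation reproduces exactly the paper's identity \eqref{fejer}, since $(1-\cos(jt))/(1-\cos t)=jF_{j-1}(t)$), arriving at the same lower bound $\sigma_0-2\sigma_1+\sigma_2$. The only cosmetic differences are that you diagonalize the circulant matrix exactly instead of citing Gray's interval bound, and your worry about $N=2n+1$ is unnecessary, as the pairing $j\leftrightarrow N-j$ still yields the same symbol there.
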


\begin{proof} By \cite[Chapter 3]{Gray} and the symmetry of the matrix, the minimal eigenvalue of $A$ is bounded from below by the minimum of the function
$$
\Phi(t)= \sigma_0+2\sum_{j=1}^n \sigma_j\cos(jt)
$$
for $t\in [0,\pi]$. The positivity of this trigonometric sum is a classical result due to Fej\'er (see \cite{Fejer} for the orginal source or \cite[Chapter 4]{MMR} for a review and English translation): in summary, using a closed form of Fej\'er kernels $\sum\limits_{|k|<j} (j-k) \cos(kt)={1-\cos (jt)\over 1-\cos t}$, we can rewrite $\Phi$ as
\begin{equation}\label{fejer}
\Phi(t) =\sum_{j=1}^{n+1} (\sigma_{j-1}-2\sigma_j+\sigma_{j+1}) {1-\cos (jt)\over 1-\cos t}.
\end{equation}
By \eqref{convex} each coefficient $\sigma_{j-1}-2\sigma_j+\sigma_{j+1}$ is strictly positive, so that $\Phi$ is a sum of non-negative functions. In particular
$$
\min\Phi\ge \sigma_{0}-2\sigma_1+\sigma_{2}>0
$$
and the claim.
\end{proof}

\noindent {\bf Remark} If $A=(a_{i,j})$ is a symmetric, $n-$banded, $N\times N$ Toeplitz matrix;  that is, $ a_{i,j}=\sigma_{|i-j|}$ for $|i-j|\leq n$ with $\sigma_{n}\neq 0$ and $a_{i,j}=0$ otherwise, then a general result about Hermitian Toeplitz matrices  \cite[Lemma $4.1$]{Gray} ensures that the eigenvalues of $A$ belong to the interval $[m_\Phi, M_\Phi]$ whose endpoints are respectively the minimum and the maximum of the Fourier series $$\Phi(t)=\sum_{k=-\infty}^{+\infty}\sigma_k e^{ikt}=  \sigma_0+2\sum_{k=1}^n \sigma_k\cos(kt).$$
Henceforth, if the numbers $\{\sigma_k\}_{k=0,\dots , n}$ satisfy convexity condition (\ref{convex}) then Lemma \ref{lemma} also holds for this class of Toeplitz matrices.

\section{Coerciveness and discrete-to-continuum convergence}

%
%
%
%

We first examine the coerciveness properties of reference quadratic energies as follows.

\begin{theorem}\label{quacoer} Let $\rho_i>0$, $i\in\{1,\ldots M\}$ be a decreasing array of real numbers.  Let $(a,b)$ be a bounded interval in $\mathbb R$ and let the energies
\begin{equation}
F_\e(u)=\sum_{k\in\mathbb Z}\e \biggl|{1\over\e}\Bigl(\sum_{j=1}^M (u_{k+j}-u_k)\rho_j -\sum_{j=1}^M (u_{k-j+1}-u_k)\rho_j\Bigr)\biggr|^2
\end{equation}
be defined for $u\colon\e\mathbb Z\to \mathbb R$ with $u(x)=0$ if $x\in \mathbb R\setminus (a,b)$. Then there exists a constant $\Lambda$ such that 
\begin{equation}\label{coerc-e}
F_\e(u)\ge \Lambda\sum_{k\in\mathbb Z}\e \Bigl|{u_{k+1}-u_k\over\e}\Bigr|^2
\end{equation}
for all $u$ and $\e<{1\over 2M}$.
\end{theorem}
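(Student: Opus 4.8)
The plan is to remove the $\e$-scaling, to rewrite the discrete nonlocal gradient of $u$ as the image of the elementary forward difference $v_k:=u_k-u_{k-1}$ under a symmetric banded circulant operator whose entries are the partial sums of the $\rho_i$, and then to apply Lemma~\ref{lemma}. To begin with, the quantity inside the modulus in $F_\e(u)$ equals $(u'_\rho)_k$ from \eqref{discretegrad-0}, the $u_k$-contributions cancelling, so that $F_\e(u)=\frac1\e\sum_{k\in\mathbb Z}|(u'_\rho)_k|^2$, while the right-hand side of \eqref{coerc-e} equals $\frac\Lambda\e\sum_k(u_{k+1}-u_k)^2=\frac\Lambda\e\sum_k v_k^2$. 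It therefore suffices to produce $\Lambda>0$, independent of $\e$ and of $u$, with $\sum_k|(u'_\rho)_k|^2\ge\Lambda\sum_k v_k^2$. By the last equality in \eqref{discretegrad-diff}, $(u'_\rho)_k=\sum_{s=-(M-1)}^{M-1}\sigma_{|s|}\,v_{k+1+s}$, where $\sigma_s:=\rho_{s+1}+\cdots+\rho_M$ for $0\le s\le M-1$ (so that $\sigma_0=\rho_1+\cdots+\rho_M$) and $\sigma_s:=0$ for $s\ge M$. After the norm-preserving re-indexing $\widetilde w_\ell:=(u'_\rho)_{\ell-1}$ this becomes $\widetilde w=Sv$, with $S$ the symmetric $(M-1)$-banded operator of entries $\sigma_{|i-j|}$.

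Next I would pass to a finite-dimensional model. Since $u=0$ outside the bounded interval $(a,b)$, only finitely many $u_k$ are nonzero — if there are none, \eqref{coerc-e} is trivial — so the sequences $v$ and $\widetilde w$ have bounded support. Hence I may fix an integer $N>2(M-1)$ so that this support lies strictly inside a block of $N$ consecutive integers, at distance more than $M$ from its ends (the smallness of $\e$ in the statement gives enough room); identifying that block with $\mathbb Z/N\mathbb Z$, the operator $S$ becomes the $N\times N$ symmetric banded circulant matrix \eqref{top} with $\sigma_0,\dots,\sigma_{M-1}$ as above and $n=M-1$, and its wrap-around entries only ever multiply the boundary zeros, so that neither $\sum_k|(u'_\rho)_k|^2=\|Sv\|^2$ nor $\sum_k v_k^2=\|v\|^2$ is altered. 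Since $S$ is symmetric, $\|Sv\|^2=v^{\mathrm T}S^2v$, and the eigenvalues of $S^2$ are the squares of those of $S$.

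It remains to bound the spectrum of $S$ away from $0$ uniformly in $N$. For the present weights $\sigma_{j-1}-\sigma_j=\rho_j$, so the convexity condition \eqref{convex} reduces to $\rho_j>\rho_{j+1}$ for $j=1,\dots,M-1$, i.e. to the assumed strict monotonicity of $\{\rho_i\}$, and $\sigma_{M-1}=\rho_M\neq0$; thus Lemma~\ref{lemma} applies and yields $\lambda_{\min}(S)\ge\sigma_0-2\sigma_1+\sigma_2=\rho_1-\rho_2>0$, a bound depending only on $\rho$. Consequently every eigenvalue of $S^2$ is at least $(\rho_1-\rho_2)^2$, whence $\|Sv\|^2\ge(\rho_1-\rho_2)^2\|v\|^2$; this is precisely \eqref{coerc-e} with $\Lambda=(\rho_1-\rho_2)^2$.

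The computational heart — and the place where an error could creep in — is the first step: reading off from \eqref{discretegrad-diff} that $(u'_\rho)_k$ is the symmetric banded-circulant image of the difference sequence $v$ with weights the partial sums $\sigma_s$, and verifying that for these weights the convexity hypothesis \eqref{convex} is exactly $\rho_j>\rho_{j+1}$ — which is also why the ``formally more symmetric'' choice \eqref{symmgrad} fails, the corresponding operator annihilating the alternating sequence, as the example in the Introduction shows. After that, everything reduces to the identity $\|Sv\|^2=v^{\mathrm T}S^2v$ together with Lemma~\ref{lemma}. The one genuinely technical point will be the periodization onto $\mathbb Z/N\mathbb Z$ with $N>2(M-1)$: it must be arranged so that every cyclic wrap-around term multiplies only the zero padding forced by the constraint $u=0$ outside $(a,b)$, and it is here that the smallness assumption on $\e$ is used.
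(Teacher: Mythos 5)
Your proposal is correct and follows essentially the same route as the paper: you rewrite $(u'_\rho)_k$ via \eqref{discretegrad-diff} as the image of the nearest-neighbour differences under the symmetric banded circulant matrix with entries $\sigma_{|i-j|}$, $\sigma_s=\rho_{s+1}+\cdots+\rho_M$, check that the strict monotonicity of $\{\rho_i\}$ is exactly the convexity condition \eqref{convex}, and invoke Lemma \ref{lemma} together with $\|Sv\|^2=v^{\mathrm T}S^2v\ge\lambda_{\min}^2\|v\|^2$. The only differences from the paper's proof are bookkeeping (the paper takes $N\ge 1/\e+4M$ and shifts indices by $2M$, and leaves $\Lambda=\lambda_{\min}^2$ implicit where you make the explicit bound $\lambda_{\min}\ge\rho_1-\rho_2$), so this is the same argument.
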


\begin{proof} We can suppose without loss of generality that $(a,b)=(0,1)$.

Let $A$ be the matrix defined in \eqref{top} with $n=M-1$ and 
$$
\sigma_j=\sum_{k=j+1}^M \rho_k.
$$
Note that the monotonicity condition on $\rho_i$ ensures that \eqref{convex} holds, and that 
the function in \eqref{fejer} is given by
$$
\Phi(t) =\sum_{j=1}^{n+1} (\rho_j-\rho_{j+1}) {1-\cos (jt)\over 1-\cos t}.
$$

By Lemma \ref{lemma} for all $z$ with $\sum_{k=1}^N z_k^2=1$ we have
\begin{eqnarray*}
|Az|\ge |\langle Az,z\rangle|\ge \lambda_{\rm min},
\end{eqnarray*}
so that for all $z$ we have 
$
|Az|^2\ge\Lambda|z|^2
$,
where $\Lambda=(\lambda_{\rm min})^2$. Hence, \eqref{coerc-e} follows upon taking $N\ge {1\over\e}+4M$
and applying the previous estimate to $z_k= {1\over\e} (u_{k+2M}- u_{k+2M-1})$. Note that $z_k=0$ for $k\in\{1,\ldots, 2M\}$ and $k\in \{N-2M+1,\ldots, N\}$, so that
$$
F_\e(u) = \sum_{k\in\mathbb Z}\e \bigl|(A z)_{k-2M}\bigr|^2,
$$
and the claim follows.
\end{proof}

The following result proves a discrete-to-continuum convergence for discrete energies using the improved coerciveness.

\begin{theorem} Let $\rho_i>0$, $i\in\{1,\ldots M\}$ be a decreasing array of real numbers.  Let $f$ be a convex function with $c_1|z|^2+c_0\le f(z)\le c_2|z|^2+c_3$ with $c_1, c_2>0$.
Let $(a,b)$ be a bounded interval in $\mathbb R$ and let the energies
\begin{equation}
F_\e(u)=\sum_{k\in\mathbb Z}\e f\biggl({1\over\e}\Bigl(\sum_{j=1}^M (u_{k+j}-u_k)\rho_j -\sum_{j=1}^M (u_{k-j+1}-u_k)\rho_j\Bigr)\biggr)
\end{equation}
be defined for $u\colon\e\mathbb Z\to \mathbb R$ with $u(x)=0$ if $x\in \mathbb R\setminus (a,b)$. Then there exists the $\Gamma$-limit of $F_\e$ with respect to the weak $L^2$-convergence of interpolations as $\e\to0$ and 
\begin{equation}\label{lim-e}
\Gamma\hbox{-}\lim_{\e\to0}F_\e(u)= \int_{(a,b)}f(Ku')dt, \qquad K=\sum_{j=1}^M(2j-1)\rho_j.
\end{equation}
with domain $H^1_0(a,b)$.
\end{theorem}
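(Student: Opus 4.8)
The plan is to run the two halves of a $\Gamma$-convergence proof, using the improved coerciveness of Theorem~\ref{quacoer} (hence Lemma~\ref{lemma}) as the only non-routine ingredient. After rescaling I would assume $(a,b)=(0,1)$ and, so that $F_\e$ takes finite values on the admissible class, that $f(0)=0$. The starting observation is that, if $\hat v^\e$ denotes the piecewise-constant interpolation at scale $\e$ of the array $\{((u^\e)'_{\rho_\e})_k\}_k$, then $F_\e(u^\e)=\int_{\R}f(\hat v^\e)\,dx$, while \eqref{discretegrad-diff} writes $\hat v^\e$ as a \emph{fixed} finite linear combination of $\e$-translates of the interpolated difference quotients $(u^\e_{k+1}-u^\e_k)/\e$, with coefficients $\sigma_{M-1},\dots,\sigma_1,\sigma_0,\sigma_1,\dots,\sigma_{M-1}$ (where $\sigma_j=\sum_{k>j}\rho_k$), whose total is $K$.

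\emph{Lower bound.} Let the interpolations of $u^\e$ converge weakly in $L^2(\R)$ to $u$, and assume $\sup_\e F_\e(u^\e)<\infty$. From $f(z)\ge c_1|z|^2+c_0$, and since only $O(1/\e)$ summands of $F_\e(u^\e)$ differ from $f(0)=0$, one gets $\sum_k\e|((u^\e)'_{\rho_\e})_k|^2\le C$ uniformly, so Theorem~\ref{quacoer} yields a uniform bound on $\sum_k\e|(u^\e_{k+1}-u^\e_k)/\e|^2$; together with a discrete Poincar\'e inequality (the $u^\e$ vanish outside $(0,1)$) this makes the piecewise-affine interpolations of $u^\e$ bounded in $H^1(\R)$, so $u\in H^1_0(0,1)$ and the interpolated difference quotients converge weakly in $L^2$ to $u'$. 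Since a translation by $\e\cdot\mathrm{const}$ is an $L^2$-isometry converging to the identity, each translate of these quotients also converges weakly to $u'$, hence $\hat v^\e\weak Ku'$ in $L^2(\R)$. As $f$ is convex and bounded below, $v\mapsto\int_I f(v)\,dx$ is sequentially weakly lower semicontinuous on $L^2(I)$ for every bounded interval $I$; picking $I\supset[0,1]$ containing the (shrinking) supports of all $\hat v^\e$ and using $f(0)=0$,
\[
\liminf_{\e\to0}F_\e(u^\e)=\liminf_{\e\to0}\int_I f(\hat v^\e)\,dx\ \ge\ \int_I f(Ku')\,dx=\int_{(0,1)}f(Ku')\,dx .
\]

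\emph{Upper bound.} For $u\in C^\infty_c(0,1)$ I would take $u^\e_k=u(\e k)$; the computation leading to \eqref{discretegrad-phi} gives $((u^\e)'_{\rho_\e})_k=Ku'(\e k)+\omega_\e(k)$ with $\sup_k|\omega_\e(k)|\to0$ and $\omega_\e(k)=0$ unless $\e k$ lies in a fixed bounded set, so by convergence of Riemann sums and uniform continuity of $f$ on bounded sets
\[
F_\e(u^\e)=\sum_k\e\,f\bigl(Ku'(\e k)+\omega_\e(k)\bigr)\ \longrightarrow\ \int_{\R}f(Ku')\,dx=\int_{(0,1)}f(Ku')\,dx ,
\]
while the interpolations of $u^\e$ converge to $u$ strongly in $L^2$; this gives $\Gamma\text{-}\limsup_{\e\to0}F_\e(u)\le\int_{(0,1)}f(Ku')\,dx$ for $u\in C^\infty_c(0,1)$. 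For general $u\in H^1_0(0,1)$ I would take $u_n\in C^\infty_c(0,1)$ with $u_n\to u$ in $H^1$; then $u_n'\to u'$ in $L^2$ and the growth bound $f(z)\le c_2|z|^2+c_3$ gives $\int f(Ku_n')\to\int f(Ku')$ by a Vitali/generalized-dominated-convergence argument. Since the $\Gamma\text{-}\limsup$ is lower semicontinuous for the weak $L^2$-convergence of interpolations and $u_n\to u$ in that sense,
\[
\Gamma\text{-}\limsup_{\e\to0}F_\e(u)\le\liminf_{n}\Bigl(\Gamma\text{-}\limsup_{\e\to0}F_\e(u_n)\Bigr)\le\liminf_{n}\int f(Ku_n')\,dx=\int f(Ku')\,dx .
\]
Combined with the lower bound, and noting that finiteness of the $\Gamma\text{-}\liminf$ forces $u\in H^1_0(0,1)$ while every such $u$ has finite energy, this yields \eqref{lim-e} with domain $H^1_0(0,1)$.

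\emph{Main obstacle.} The delicate point is entirely in the lower bound: the plain weak-convergence computation recalled around \eqref{weakeq} only identifies the weak $L^2$-limit of $\hat v^\e$ as $Ku'$ and gives neither $H^1$-compactness of $\{u^\e\}$ nor semicontinuity along the sequence. Both are supplied by the uniform minimal-eigenvalue estimate of Lemma~\ref{lemma}, and this is precisely where the monotonicity/convexity of $\{\rho_j\}$ is used — it fails, as the $(-1)^k$ example in the introduction shows, for the symmetric gradient or for non-monotone kernels. The remaining technicalities (uniformity of the $o(1)$ in the recovery sequence, and the bookkeeping of boundary terms so that the discrete sums genuinely pass to integrals over $(a,b)$) are routine.
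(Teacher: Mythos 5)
Your proposal is correct and follows essentially the same route as the paper: coerciveness from Theorem~\ref{quacoer} to get weak $H^1$-precompactness, the decomposition \eqref{discretegrad-diff} into translated difference quotients to identify the weak $L^2$-limit of the nonlocal gradients as $Ku'$, weak lower semicontinuity of the convex integral for the liminf, and pointwise convergence on $C^\infty_c(a,b)$ followed by density for the limsup. The extra care you take with the normalization $f(0)=0$, the discrete Poincar\'e inequality, and the Vitali argument in the density step only fills in details the paper leaves implicit.
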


\begin{proof} Let $u^\e$ converge weakly to $u$ and let $F_\e(u^\e)$ be equibounded. Then by the previous theorem the sequence of the corresponding piecewise-affine interpolations is weakly precompact in $H^1(\mathbb R)$,
so that indeed $u^\e$ weakly converges to $u$ in $H^1(\mathbb R)$. Since $u^\e=0$ outside $(a,b)$ the convergence is actually in $H^1_0(a,b)$. Since for all fixed $j$ all interpolations of difference quotients ${1\over\e} (u^\e_{k+j-1}- u^\e_{k+j})$ weakly converge to $u'$, by \eqref{discretegrad-diff} the weak limit of $(u^\e)'_\rho$ is $K u'$ (this can also be obtained as in \eqref{weakeq}). By the weak lower semicontinuity of $z\mapsto \int f(z)\,dt$ we then obtain the liminf inequality.

If $u\in C^\infty_c(a,b)$, extended by $0$ outside $(a,b)$, then we have
$$
F_\e(u)= \sum_{k\in \mathbb Z} \e f(K u'(\e k)) + o(1),
$$
as $\e\to 0$, and we obtain the pointwise convergence to $\int_{(a,b)}f(Ku')dt$. The limsup inequality follows by density.
\end{proof}

\section{Application to continuum interpolations}
We will use discretizations to provide an approximation for $\Gamma$-limits of continuum functionals of the form
$$
F_\e(u)=\int_\mathbb R f(\nabla_{\rho_\e}u)\dx
$$
in the one-dimensional setting, with respect to the weak convergence in $L^2$. As above, here $\rho_\e(\xi)={1\over\e}\rho({\xi\over\e})$. 

Note that the equality
$$
\int_\mathbb R \varphi(x) \nabla_{\rho_\e}u(x)\dx= - \int_\mathbb R u(x) \nabla_{\rho_\e}\varphi(x)\dx
$$
implies as in \eqref{weakeq} that if $u_\e$ weakly converges in $L^2(\mathbb R)$ to $u$ and the sequence $ \nabla_{\rho_\e}u_\e$ is bounded in $L^2(\mathbb R)$, then actually $u\in H^1(\mathbb R)$ and  the sequence $ \nabla_{\rho_\e}u_\e$ weakly converges in $L^2(\mathbb R)$ to $K u'$, where
$$
K=\int_\mathbb R \rho(\xi)|\xi|\,d\xi,
$$
and, if a growth condition of the type  $f(z)\ge c_1|z|^2$ holds, we deduce the $\Gamma$-convergence of
$F_\e$ to 
$$
F(u)=\int_\mathbb R f(Ku')\dx
$$
with respect to the weak convergence in $L^2(\mathbb R)$.
 However, we observe that for sequences of functions $u_\e\in L^2(\mathbb R)$ with $F_\e(u_\e)$ equibounded in general we cannot deduce any stronger coerciveness. Indeed, note that if $\rho$ is integrable then for each $\e>0$ $\nabla_{\rho_\e}$ is a continuous operator  in $L^2(\mathbb R)$, so that for a fixed function $u\in H^1(\mathbb R)$ we can find $u_\e$ tending to $u$ in the $L^2$-norm such that  $\nabla_{\rho_\e}u_\e$ is close to  $\nabla_{\rho_\e}u$ but with $\nabla u_\e$ unbounded in $L^2(\mathbb R)$. More in general, we can give an explicit counterexample valid also for Riesz fractional gradients. Let $\rho(\xi)={|\xi|^{-1-\alpha}}$ with $\alpha \in(0,1)$. 
Let $R>1$ be fixed and let $\varphi$ be the cut-off function defined as $\varphi(t)=\min\{1, (R-|t|)^+\}$.  
We define 
$$u_\e(t)=\e^2\sin\Big(\frac{t}{\e^2}\Big)\varphi(t).$$ 
We then have  
$$F_\e(u_\e)\leq 2\int_{-R}^R\Big(\frac{1}{\e}\int_{-\infty}^{\infty} \rho_{\e}(x-y)(u_\e(x)-u_\e(y)) \frac{x-y}{|x-y|}\, dx\, \Big)^2 dy.$$ 
By using the bounds $|u_\e(t)-u_\e(s)|\leq 2|t-s|$ and $|u_\e(t)|\leq \e^2$, 
we obtain 
\begin{eqnarray*}
&&{\hspace{-1cm}}\frac{1}{\e}\Big|\int_{-\infty}^{\infty} \rho_{\e}(x-y)(u_\e(x)-u_\e(y)) \frac{x-y}{|x-y|}\, dx\,\Big| \\
&&\leq \frac{1}{\e^2}\int_{\{|\xi|<\e^{3/2}\}} \frac{\e^{1+\alpha}}{|\xi|^{1+\alpha}}|u_\e(y+\xi)-u_\e(y)|\, d\xi\, + 
\frac{1}{\e^2}\int_{\{|\xi|>\e^{3/2}\}} \frac{\e^{1+\alpha}}{|\xi|^{1+\alpha}}|u_\e(y+\xi)|\, d\xi\, \\
&&\leq \frac{4}{\e^{1-\alpha}}\int_{0}^{\e^{3/2}} \frac{1}{\xi^{\alpha}}\, d\xi\, + 
2\e^{1+\alpha}\int_{\e^{3/2}}^{+\infty} \frac{1}{\xi^{1+\alpha}}\, d\xi 
=\frac{4}{1-\alpha} \e^{\frac{1-\alpha}{2}} + 
\frac{2}{\alpha} \e^{1-\frac{\alpha}{2}}\leq c(\alpha)\e^{\frac{1-\alpha}{2}}. 
\end{eqnarray*}
Hence, 
$$F_\e(u_\e)\leq 4R\ c(\alpha)^2\e^{1-\alpha}$$ 
which is infinitesimal as $\e\to 0$, so that  there is no constant $c$ such that  
$F_\e(u_\e)\geq c \|u_\e^\prime\|^2_{L^2}$. 

\bigskip
We suppose now that $\rho:\mathbb R\to [0,+\infty)$ be a non-negative even continuous kernel with support $[-1,1]$ and decreasing on $[0,1]$. 
For $M\in\mathbb N$ we let $\rho^M_i=\rho({i\over M})$  for $i\in\{1,\ldots, M\}$ and define the even piecewise-constant function $\rho^M$ by 
$$\rho^M(\xi)=\rho_i^M \ \ \ \hbox{\rm in }\ \Big(\frac{i-1}{M}, \frac{i}{M}\Big).$$
We also set $\rho^M_\e(\xi)=\frac{1}{\e}\rho^M(\frac{\xi}{\e})$. The family $F^{M,\e}$ of discrete energies defined on functions $u\colon {\e\over M} \mathbb Z\to\mathbb R$ by 
$$
 F^{M,\e}(u)=\sum_{k\in\mathbb Z} {\e\over M} f\bigl((u'_{\rho^M_\e})_k\bigr)
$$
can be interpreted as an approximation of the family $F_\e$ in the sense that the limit as $M\to+\infty$ of the $\Gamma$-limit of $F^M_\e$ coincides with that of $F_\e$. Moreover, for fixed $M$, the family $\{ F^{M,\e}\}_\e$ is equicoercive in $H^1(\mathbb R)$ in the sense specified in the first part of the paper. 

The sequence $F^{M,\e}$ can be related to the sequence of continuum energies
$$
F^M_\e(u)=\int_\mathbb R f(\nabla_{\rho^M_\e}u)\dx.
$$
Given a sequence  $\{u_\e\}$ with $F^M_\e(u_\e)$ equibounded, we can suppose, up to a small translation, that
$$
F^M_\e(u_\e)\ge \sum_{k\in\mathbb Z}  \frac{\e}{M} 
f\bigg(\frac{1}{\e}\int_{-\e}^\e\rho_\e(\xi) \,u\Big(\frac{\e k}{M}+\xi\Big)\frac{\xi}{|\xi|}\, d\xi\bigg).
$$
We define the sequence of discrete functions 
$u^{\e,M}\colon {\e\over M} \mathbb Z\to\mathbb R$ as 
$$
u^{\e,M}\Big(\frac{\e}{M}j\Big)=u^{\e,M}_{j}=\frac{\e}{M}\int_{\frac{\e (j-1)}{M}}^{\frac{\e j}{M}}u_\e(t)\, dt.
$$ 
Moreover, we define the functions $z^{\e,M}\colon \frac{\e}{M}\mathbb Z\to\mathbb R$ by 
$$
z^{\e,M}\Big(\frac{\e}{M}j\Big)=z^{\e,M}_j=\frac{M}{\e}(u^{\e,M}_{j}-u^{\e,M}_{j-1}).
$$
These functions are the difference quotients of $u^{\e,M}$ on ${\e\over M} \mathbb Z$.

We have \begin{eqnarray*}
&&\hskip-1.5cm\nabla_{\rho^M_\e}u\Big(\frac{\e k}{M}\Bigr)=
\frac{1}{\e}\int_{\mathbb R}\rho^M_\e(\xi) u\Big(\frac{\e k}{M}+\xi\Big)\frac{\xi}{|\xi|}\, d\xi\\
&=&\frac{1}{\e^2}\int_{-\e}^\e\rho^M\Big(\frac{\xi}{\e}\Big) u\Big(\frac{\e k}{M}+\xi\Big)\frac{\xi}{|\xi|}\, d\xi\\
&=&
\frac{1}{\e M} \Big(\sum_{i=1}^M \rho^M_i u^{\e,M}_{i+k}- \sum_{i=-M+1}^{0}\rho^M_{1-i} u^{\e,M}_{i+k}\Big)\\
&=&
\frac{1}{\e M} \Big(\rho^M_1 (u^{\e,M}_{1+k}-u^{\e,M}_{k})+\rho^M_2(u^{\e,M}_{2+k}-u^{\e,M}_{-1+k})+\dots+
\rho^M_M(u^{\e,M}_{M+k}-u^{\e,M}_{-M+1+k})\Big)\\
&=&\frac{1}{M^2}\Big( (\sum_{j=1}^M \rho^M_j) z^{\e,M}_{1+k} 
+ (\sum_{j=2}^M \rho^M_j) (z^{\e,M}_{k}+ z^{\e,M}_{2+k}) + \dots + 
\rho^M_M(z^{\e,M}_{-M+1+k}+z^{\e,M}_{M+k})\Big),
\end{eqnarray*} 
so that $\nabla_{\rho^M_\e}u$ at ${\e\over M} k$ coincides with the discrete gradient of $u^{\e,M}$ at $k$.

For $i\in \{0,\dots, M-1\}$ we consider
$$\sigma_i=\sigma_i^M=\sum_{j=i+1}^M \rho^M_j.$$ 
Let $u^\e$ be compactly supported, so that $u^\e_k$ is not zero for $k\in [-N_\e,N_\e]$, and we consider the circulating matrix as defined above of dimension $2N_\e+2M+1$, which is denoted by $A^M$.

If we take $z=z^{\e,M}$ as the vector with components $z^{\e,M}_j$, we obtain  
\begin{eqnarray*}
&&\hspace{-1cm}\sum_{k=-N_\e-M}^{N_\e+M} \frac{\e}{M} 
\Big(\frac{1}{\e}\int_{-\e}^\e\rho_\e(\xi) u\Big(\frac{\e k}{M}+\xi\Big)\frac{\xi}{|\xi|}\, d\xi\Big)^2  \\
& =&\sum_{k=-N_\e-M}^{N_\e+M} \frac{\e}{M} 
\Big( \frac{1}{M^2} \langle A^Mz, e_{k+1}\rangle \Big)^2=  \frac{\e}{M} 
 \Bigl(\frac{1}{M^2} |A^Mz|\Bigr)^2\\
 &\geq &
\frac{\e}{M} \Big(\frac{\lambda_{\rm min}}{M^2}\Big)^2 |z|^2= \Big(\frac{\lambda_{\rm min}}{M^2}\Big)^2\sum_{k=-N_\e-M}^{N_\e+M} \frac{\e}{M} \Big(\frac{u^{\e,M}_k-u^{\e,M}_{k-1}}{{\e\over M}}\Big)^2, 
\end{eqnarray*} 
which proves the equicoerciveness of $\{F^M_\e\}_\e$. Moreover, if $u_\e$ weakly converges to $u$ in $L^2(\mathbb R)$ then $\{u^{\e,M}\}_\e$ weakly converge to $u$ in $H^1(\mathbb R)$ and we have the lower bound
$$
\liminf_{\e\to 0^+}F^M_\e(u_\e) \ge \liminf_{\e\to 0^+} F^{M,\e}(u^{\e,M}).
$$
A direct computation for $u\in C^1(\mathbb R)$ gives also the upper bound and shows the equality of the $\Gamma$-limits. By letting $M\to+\infty$ we obtain an approximation of the $\Gamma$-limit of $F_\e$.

\section{Generalization to higher dimensions}
We can follow the arguments used above to provide a discrete approximation in dimension higher than one. The definition of discrete nonlocal gradient needs some care in order to avoid excessive cancellations of terms. This can be done using some slight asymmetries as for the similar problems already encountered in dimension one. 

In order to generalize the one-dimensional definition, note that we can rewrite the one-dimensional non-local gradient at a point $k\in\mathbb Z$ as
\begin{equation}
\bigl(\nabla_\rho u\bigr)_k= \sum_{i\in \mathbb Z,\ i>0} \rho_i u_{k+i}-\sum_{i\in \mathbb Z,\ i\le 0} \rho_iu_{k+i}
=\sum_{i\in \mathbb Z} \rho_i u_{k+i}\,{\rm sign}\Bigl(i-{1\over 2}\Bigr),
\end{equation}
where 
\begin{equation}
\rho_i= \rho\Bigl(|i-{1\over 2}|\Bigr).
\end{equation}
This definition could be transposed to functions defined in $\mathbb Z^d$, for which the discrete nonlocal gradient is a vector in $\mathbb R^d$. The discrete nonlocal derivative in the $n$-th direction could be defined as 
\begin{equation}
\Bigl({\partial_\rho u\over\partial x_n} \Bigr)_k= \sum_{i\in \mathbb Z^d} \rho_i u_{k+i}{i_n-{1\over 2}\over \Bigl|i-\Bigl({1\over 2},\ldots {1\over 2}\Bigr)\Bigr|}
\end{equation}
for $k\in\mathbb Z^d$, where 
\begin{equation}
\rho_i= \rho\Bigl(\Bigl|i-\Bigl({1\over 2},\ldots {1\over 2}\Bigr)\Bigr|\Bigr).
\end{equation}
However, this definition would not allow to describe properties of the interpolation of functions $u$, since, for example, the oscillating function $u$ with value $u_k=(-1)^{|k_1|+\cdots+|k_n|}$ would have zero discrete non-local gradient. 

We slightly modify the definition above introducing an asymmetry between the $n$-th direction and the others, which forbids oscillations with zero gradient.

\begin{definition}
Let $\rho:[0,+\infty)\to [0,+\infty)$ be a positive kernel with support $[0,M]$, decreasing in $[0,M]$.
We define  the {\em discrete nonlocal partial derivative} in the $n$-th direction of a function $u:\mathbb Z^d\to\mathbb R$ as the function defined by
\begin{equation}
\Bigl({\partial_\rho u\over\partial x_n} \Bigr)_k= \sum_{i\in \mathbb Z^d} \rho^n_i u_{k+i}\end{equation}
for $k\in\mathbb Z^d$, where 
\begin{equation}
\rho^n_i= \rho\Bigl(|i-{1\over 2}e_n|\Bigr){i_n-{1\over 2}\over |i-{1\over 2}e_n|}.
\end{equation}
Note that $\rho^n_i$ is non-negative for $i_n>0$ and non-positive for $i_n\le 0$, in analogy with the one-dimensional definition. The {\em discrete nonlocal gradient} is the vector $\nabla_{\rho} u$ in $\mathbb R^d$ whose $n$-th component is the discrete nonlocal derivative in the $n$-th direction.

If $\e>0$ the scaled discrete nonlocal partial derivatives $\displaystyle{\partial_{\rho_\e} u\over\partial x_n}$ and related gradient are defined by scaling as in the one-dimensional case.
\end{definition}

Properties of coerciveness for energies involving discrete nonlocal gradients can be proven by resorting to examining one-dimensional sections. We give an example in dimension two.

\begin{example}\rm In dimension $d=2$ we consider the simplest non-trivial case with $M=2$. In this case we have
\begin{eqnarray}\nonumber
\Bigl({\partial_\rho u\over\partial x_1} \Bigr)_k&=& \rho_1(u_{k+e_1}- u_k)+ \rho_2(u_{k+2e_1}- u_{k-e_1})\\
&&+ \varrho(u_{k+e_1+e_2}- u_{k+e_2})+\varrho(u_{k+e_1-e_2}- u_{k-e_2}),
\end{eqnarray}
where 
$$
\rho_1= \rho\Big({1\over 2}\Big),\qquad \rho_2=\rho\Big({3\over 2}\Big),\qquad \varrho=\rho\Big({\sqrt5\over 2}\Big){1\over\sqrt5}.
$$
Following the one-dimensional argument, we rewrite this sum, in terms of the differences $z_k=u_{k+e_1}-u_k$, as
\begin{eqnarray}\nonumber
\Bigl({\partial_\rho u\over\partial x_1} \Bigr)_k= (\rho_1+\rho_2)z_k+ \rho_2 z_{k+e_1}+\rho_2 z_{k-e_1}+\varrho z_{k+e_2}+\varrho z_{k-e_2}.
\end{eqnarray}

Suppose now that the support of $u$ be contained in $[-N+2,N-2]^2$. Then we can write 
$$
\Bigl({\partial_\rho u\over\partial x_1} \Bigr)_k= (A^N z)_k,
$$
where $A^N$ is a $N^2\times N^2$ symmetric Toeplitz  circulant matrix with $\rho_1+\rho_2$ on the diagonal, $\rho_1$ on the two next off-diagonal terms, and $\varrho$ on the $N$-th neighbours. We can then use \cite[Chapter 3]{Gray} and the symmetry of the matrix, to bound the minimal eigenvalue of $A^N$ i by the minimum of the function
$$
\Phi^N(t)= \rho_1+\rho_2+2\rho_2\cos(t)+ 2\varrho\cos(Nt).
$$
A sufficient condition independent of $N$ that ensures that the minimal eigenvalue of $A^N$ is strictly positive is 
\begin{equation}\label{varrho}
\rho_1>\rho_2+2\varrho.
\end{equation}
We can argue in the same way for the partial derivative in the $x_2$-direction.
If condition \eqref{varrho} is satisfied then we can argue as in the proof of Theorem \ref{quacoer}. Namely, if we define
\begin{equation}
F_\e(u)=\sum_{k\in\mathbb Z^2}\e^2 \bigl|(\nabla_{\rho_\e} u)_k\bigr|^2,
\end{equation}
then there exists $\Lambda$ such that 
\begin{equation}\label{2dcoer}
F_\e(u)\ge\Lambda\sum_{k,\ell\in\mathbb Z^2, |k-\ell|=1}\e^2 \biggl|{u_k-u_\ell\over\e}\biggr|^2,
\end{equation}
where $u_k= u(\e k)$ for $u:\e\mathbb Z^2\to\mathbb R$.
\end{example}

We do not pursue further the very interesting issue of the optimization of the conditions on $\rho$ to ensure coerciveness conditions as in \eqref{2dcoer}.

\bigskip
\noindent{\bf Acknowledgments.} We acknowledge valuable comments by Carolin Kreisbeck and Giorgio Stefani.

\end{document}